\newtheorem{theorem}{Theorem}[section]
\newtheorem{lemma}[theorem]{Lemma}
\newtheorem{proposition}{Proposition}[section]
\theoremstyle{definition}
\newtheorem{definition}[theorem]{Definition}
\newtheorem{corollary}{Corollary}
\newtheorem{question}{Question}
\theoremstyle{remark}
\newtheorem{remark}[theorem]{Remark}
\newtheorem{conjecture}{Conjecture}
\numberwithin{equation}{section}
\begin{document}

\title{The prime index function}

\author{Theophilus Agama}
\address{Department of Mathematics, African Institute for Mathematical science, Ghana
}
\email{theophilus@aims.edu.gh/emperordagama@yahoo.com}


\subjclass[2000]{Primary 54C40, 14E20; Secondary 46E25, 20C20}

\date{\today}



\footnote{
\par 
.}%
.

\maketitle

\begin{abstract}
In this paper we introduce the prime index function \begin{align}\iota(n)=(-1)^{\pi(n)},\nonumber
\end{align} where $\pi(n)$ is the prime counting function. We study some elementary properties and theories associated with the partial sums of this function given by\begin{align}\xi(x):=\sum \limits_{n\leq x}\iota(n).\nonumber
\end{align}We show that a prime $p>2$ is a twin prime if and only if $\xi(p)=\xi(p+2)$. We also relate the prime index function to Cramer's conjecture by showing that \begin{align}|\xi(p_{n+1})-\xi(p_n)|+2=p_{n+1}-p_n.\nonumber
\end{align}That is, Cramer's conjecture can stated as \begin{align}\xi(p_{n+1})-\xi(p_n)\ll (\log p_n)^2.\nonumber
\end{align}This reduces the problem to obtaining very good estimates of the second prime index function.
\end{abstract}

\section{Introduction and motivation}
The prime counting function is a very important and useful function in number theory and the whole of mathematics. It is connected to many open problems in mathematics, such as the Riemman hypothesis \cite{May}. In the following sequel we introduce the prime index function $\iota(n)$, an arithmetic function which is neither additive nor multiplication. It can be considered to be of the same class with the Liouville $\lambda(n)$ function. Rather like the Liouville function defined on the prime factors of the integers, the prime index function is defined on the number of prime less than a fixed integer. It is given by \begin{align}\iota(n)=(-1)^{\pi(n)},\nonumber
\end{align}where $\pi(n)$ is the prime counting function. Given the chaotic behaviour of the prime index-function makes it an intractable function to study. Hence we introduce as well the second prime-index function given by \begin{align}\xi(x):=\sum \limits_{n\leq x}\iota(n).\nonumber
\end{align}By introducing the concept of oscillation on the second prime-index function $\xi$, we relate results of primes in short and long intervals to oscillations. It turns out that the following, which can be considered as a sibling of Bertrand's postulate, is true:

\begin{theorem}
Let $x,y\in \mathbb{R}$. If $\xi(x)=\xi(y)$, then there exist at least a prime in the interval $(x,y]$.
\end{theorem}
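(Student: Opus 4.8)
The plan is to prove the contrapositive: if the half-open interval $(x,y]$ contains no prime, then $\xi(x)\neq\xi(y)$. The mechanism is the elementary structural fact that $\pi$, hence also $\iota$, is constant on any run of consecutive integers that contains no prime: if $p_k,p_{k+1}$ are consecutive primes then $\pi(n)=k$ for every integer $n$ with $p_k\le n<p_{k+1}$, so $\iota(n)=(-1)^k$ there, while the value of $\iota$ flips sign from one such block to the next. Thus along a prime-free stretch of integers the successive terms of the defining sum of $\xi$ all carry the same sign, so $\xi$ moves strictly monotonically (in steps of $+1$ or $-1$) across that stretch and in particular cannot revisit an earlier value.

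In detail I would proceed as follows. We may take $x<y$, and it is to be understood that $(x,y]$ contains at least one integer (e.g.\ $x,y\in\mathbb{Z}$ with $x<y$, or $y\ge x+1$), equivalently $\lfloor x\rfloor<\lfloor y\rfloor$. Since $\xi(x)=\xi(\lfloor x\rfloor)$ and $\xi(y)=\xi(\lfloor y\rfloor)$, we have $\xi(y)-\xi(x)=\sum_{\lfloor x\rfloor<n\le\lfloor y\rfloor}\iota(n)$, the sum running over the consecutive integers $m_1<m_2<\cdots<m_r$ with $m_1=\lfloor x\rfloor+1$, $m_r=\lfloor y\rfloor$ and $r=\lfloor y\rfloor-\lfloor x\rfloor\ge1$. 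Suppose for contradiction that $(x,y]$ contains no prime. Then $(m_1,m_r]\subseteq(x,y]$ contains no prime, so $\pi(m_1)=\pi(m_2)=\cdots=\pi(m_r)=:c$, and hence $\iota(m_i)=(-1)^c$ for every $i$. Therefore
\begin{align}
\xi(y)-\xi(x)=\sum_{i=1}^{r}\iota(m_i)=r\,(-1)^c\neq0,\nonumber
\end{align}
since $r\ge1$, contradicting $\xi(x)=\xi(y)$. Hence $(x,y]$ must contain a prime.

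Since the argument is entirely elementary, there is no deep obstacle; the only care needed is bookkeeping — passing correctly between the real arguments $x,y$ and their integer parts, and respecting the half-open interval conventions (together with the boundary conventions $\pi(1)=0$, $\iota(1)=1$) so that ``no prime in $(x,y]$'' really does pin down $\pi$ on all the integers appearing in the sum. The one point I would flag explicitly is the degenerate case $\lfloor x\rfloor=\lfloor y\rfloor$: then the sum for $\xi(y)-\xi(x)$ is empty, so $\xi(x)=\xi(y)$ trivially while $(x,y]$ contains no integer at all, so the theorem genuinely requires the mild hypothesis that $(x,y]$ meet $\mathbb{Z}$. It is also worth recording the slightly sharper fact that the proof yields: if $(x,y]$ is prime-free then $|\xi(y)-\xi(x)|=\lfloor y\rfloor-\lfloor x\rfloor$, which is already in the spirit of the Cram\'er-type identity quoted in the abstract.
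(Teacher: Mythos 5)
Your proof is correct and is essentially the contrapositive of the paper's own argument: both hinge on the observation that $\pi$, and hence $\iota$, is constant on the integers of a prime-free interval, so the increment $\xi(y)-\xi(x)=\sum_{x<n\le y}\iota(n)$ cannot vanish unless that range of integers is empty. Your explicit handling of the degenerate case $\lfloor x\rfloor=\lfloor y\rfloor$ (where the statement as literally written fails, e.g.\ $x=2.1$, $y=2.9$) is a genuine improvement on the paper's proof, which glosses over both this and the final step from ``parities equidistributed'' to ``a prime exists.''
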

In the spirit of understanding the twin prime conjecture, we obtain the following weaker result:

\begin{theorem}
There are infinitely many points $y\in \mathbb{R}$ such that $\xi(y-s)=\xi(y+s)$ for some $s\in \mathbb{N}$. In particular, there are infinitely many points of oscillations with period $s$.
\end{theorem}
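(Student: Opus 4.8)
The plan is to exhibit an explicit infinite family of such points using nothing more than the infinitude of primes, with $s=1$ throughout. The crucial elementary observation is that the prime index function flips sign exactly at the primes: for every integer $n\geq 1$ one has $\pi(n+1)=\pi(n)+\mathbf{1}_{\{n+1 \text{ prime}\}}$, hence $\iota(n+1)=-\iota(n)$ when $n+1$ is prime and $\iota(n+1)=\iota(n)$ otherwise. I would first record the consequence that for every prime $p\geq 3$,
\[
\xi(p)-\xi(p-2)=\iota(p-1)+\iota(p).
\]
Since $p$ is prime, $\pi(p)=\pi(p-1)+1$, so $\iota(p)=-\iota(p-1)$ and the right-hand side vanishes; thus $\xi(p-2)=\xi(p)$. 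The only thing to check is $p-2\geq 1$, which holds for all $p\geq 3$, and note that this argument is insensitive to whether $p-1$ is itself prime (the case $p=3$), since only the jump $\pi(p)=\pi(p-1)+1$ is used.

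Next I would set $y=p-1$ and $s=1$, which turns this identity into $\xi(y-s)=\xi(y+s)$. Because $\xi(p-1)=\xi(p-2)+\iota(p-1)\neq\xi(p-2)$, the partial sum is genuinely non-constant on $(y-s,y+s]$, so $y$ is a bona fide point of oscillation of period $s=1$. As there are infinitely many primes $p\geq 3$, the points $y=p-1$ form the required infinite family, which proves the theorem. In fact the same bookkeeping shows that for each such $p$ every real $y\in[p-1,p)$ works, since $\lfloor y-1\rfloor=p-2$ and $\lfloor y+1\rfloor=p$, so one obtains an entire interval of oscillation points clustered just before each prime.

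The step where a first attempt is most likely to go astray — and the only real subtlety — is resisting the temptation to imitate the twin-prime characterization $\xi(p)=\xi(p+2)$: that identity has exactly the same flavor but its hypothesis forces $p$ to be a twin prime, so it would yield infinitely many points only conditionally. The point is that the companion identity $\xi(p-2)=\xi(p)$ needs only the prime $p$ itself, making the conclusion unconditional. One should also keep the summation convention $n\geq 1$ and the small-$p$ boundary in mind, but these cause no difficulty. A secondary remark worth including is that with more care one can try to produce oscillation points of larger period by balancing the $\pm 1$ blocks of $\iota$ across a window of $2s$ integers straddling two primes; I would flag, however, that pushing this to arbitrarily large fixed $s$ runs into the distribution of prime gaps (e.g. a demand for infinitely many prescribed gaps), so the clean unconditional statement is the $s=1$ family above.
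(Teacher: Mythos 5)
Your proof is correct, and it takes a genuinely different route from the paper. The paper argues by contradiction: it assumes there are only finitely many oscillation points, deduces that beyond the last one $\xi$ must be strictly monotone, concludes that $\pi$ is eventually constant, and contradicts the infinitude of primes. Your argument is instead a direct construction: from $\pi(p)=\pi(p-1)+1$ you get $\iota(p)=-\iota(p-1)$, hence $\xi(p)-\xi(p-2)=\iota(p-1)+\iota(p)=0$, so $y=p-1$, $s=1$ works for every prime $p\geq 3$ (one checks against the paper's table that $\xi(1)=\xi(3)$, $\xi(3)=\xi(5)$, $\xi(9)=\xi(11)$, etc.). This buys you more than the paper's proof does: you obtain an explicit infinite family with a uniform period $s=1$, an entire interval $[p-1,p)$ of valid real points before each prime, and you avoid the weakest step of the paper's argument, namely the inference that "not a point of oscillation" forces strict monotonicity of $\xi$ on an infinite tail, which is asserted rather than justified. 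Your closing caveat is also well placed: extending to a prescribed larger period $s$ would require control of prime gaps, so the unconditional content really is the $s=1$ family. One cosmetic point: under the paper's Definition of oscillation the period is defined as a minimum over all real $r_i$ with $\xi(x-r_i)=\xi(x+r_i)$, so strictly speaking the period at $y=p-1$ need not equal $1$; but the theorem as stated only asks for some $s\in\mathbb{N}$ with $\xi(y-s)=\xi(y+s)$, which you have fully established.
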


\section{The prime index function}

\begin{definition}\label{first}
Let $n\geq 1$, then we set \begin{align}\iota(n)=(-1)^{\pi(n)},\nonumber 
\end{align}
\end{definition}where $\pi(n)$ is the prime counting function.
\bigskip

The prime index function is an extremely usefull function. It is basically the sequence \begin{align}\iota:\mathrm{N}\longrightarrow \{1,-1\}.\nonumber
\end{align}It is somewhat an intractable but very interesting function when we take their partial sums. Below is a table for the distribution of the prime-index function:

\begin{table}[ht]
\caption{}\label{eqtable}
\renewcommand\arraystretch{1.5}
\noindent\[
\begin{array}{|c|c|c|c|c|c|c|c|c|c|c|c|c|c|c|c|c|c|c|}
\hline
Values~of n & 1 & 2 & 3 & 4 & 5 & 6 & 7 & 8 & 9 & 10 & 11 & 12 & 13 & 14 & 15 & 16 & 17 & 18\\
\hline
\iota(n)& 1 & -1 & 1 & 1 & -1 & -1 & 1 & 1 & 1 & 1 & -1 & -1 & 1 & 1 & 1 & 1 & -1 & -1\\
\hline 
\end{array}
\]
\end{table}

\begin{definition}
Let $x\geq 1$, then we set \begin{align}\xi(x):=\sum \limits_{n\leq x}\iota(n).\nonumber
\end{align}
\end{definition}
\bigskip

Next we make a leap by understanding the distribution of the partial sums of the prime index function $\xi(x)$. Below is a table that gives the distribution of the first eighteen values of $\xi(x)$.

\section{Distribution of the second prime index function $\xi(x)$}
In this section we give the distribution of the second prime-index function for the first eighteen values of the integers. We also establish a relationship between the values of this function and the theory of gaps between primes. 

\begin{table}[ht]
\caption{}\label{eqtable}
\renewcommand\arraystretch{1.5}
\noindent\[
\begin{array}{|c|c|c|c|c|c|c|c|c|c|c|c|c|c|c|c|c|c|c|}
\hline
Values~of x & 1 & 2 & 3 & 4 & 5 & 6 & 7 & 8 & 9 & 10 & 11 & 12 & 13 & 14 & 15 & 16 & 17 & 18\\
\hline
\xi(x) & 1 & 0 & 1 & 2 & 1 & 0 & 1 & 2 & 3 & 4 & 3 & 2 & 3 & 4 & 5 & 6 & 5 & 4\\
\hline 
\end{array}
\]
\end{table}

By observing critically the table of distribution of the prime-index function, we observe that the second prime index function can be very high in absolute terms. This is due to the fact that there are arbitrarily large gap between primes.  Occasionally it could return to zero, which is the smallest it can be in absolute terms, unless it is rescued by some prime. Despite this, we believe the following to be true:

\begin{conjecture}
Let $x\in \mathbb{R}$, then there exist some $N_0>0$ such that \begin{align}\mathrm{sup}_{x\geq 1}\bigg\{\sum \limits_{n\leq x}(-1)^{\pi(n)}\bigg\}\geq N\nonumber
\end{align}for $N\geq N_0$.
\end{conjecture}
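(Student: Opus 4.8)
I read the conjecture as the assertion that $\xi$ is unbounded above, i.e. $\sup_{x\ge 1}\xi(x)=\infty$. The plan is to convert this into a statement about the alternating sum of prime gaps and then attack that sum by an oscillation argument. For $n\ge 2$ every integer in $[p_n,p_{n+1})$ has exactly $n$ primes not exceeding it, so, in the signed form of the identity $|\xi(p_{n+1})-\xi(p_n)|+2=p_{n+1}-p_n$ recorded in the abstract,
\begin{align}
\xi(p_{n+1})-\xi(p_n)=(-1)^n(g_n-2),\qquad g_n:=p_{n+1}-p_n .\nonumber
\end{align}
Summing from $n=2$ to $m$ and using $\xi(p_2)=\xi(3)=1$ gives $\xi(p_{m+1})=1+\sum_{n=2}^{m}(-1)^n(g_n-2)$. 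Since $\iota$ is constant on each block $[p_k,p_{k+1})$, the function $\xi$ is monotone there, whence $\sup_{x\ge 1}\xi(x)$ and $\sup_{m}\xi(p_m)$ differ by $O(1)$. So it suffices to prove that the partial sums $S_m:=\sum_{n=2}^{m}(-1)^n(g_n-2)$ satisfy $\limsup_{m\to\infty}S_m=\infty$.

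Next I would argue by contradiction. Suppose $S_m\le M$ for all $m$. Put $a_n=g_n-2$, which is $\ge 0$ for $n\ge 2$, and split by parity: $E(m)=\sum_{2\le n\le m,\ n\text{ even}}a_n$ and $O(m)=\sum_{3\le n\le m,\ n\text{ odd}}a_n$. Then $E(m)-O(m)=S_m\le M$, while by the prime number theorem
\begin{align}
E(m)+O(m)=\sum_{n=2}^{m}(g_n-2)=p_{m+1}-2m-1\longrightarrow\infty .\nonumber
\end{align}
Hence $O(m)\ge\tfrac12(p_{m+1}-2m-1-M)$ for all $m$: the shifted gaps sitting above odd-indexed primes must carry, with only a bounded deficit, at least half of the whole mass $p_{m+1}-2m-1$. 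The goal is to rule out such a persistent parity bias.

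This last point is the heart of the matter. Heuristically the parity of the index $n$ of the $n$-th prime is a global quantity uncorrelated with the local size $g_n$ of the following gap, so $E(m)$ and $O(m)$ should each be $\sim\tfrac12(p_{m+1}-2m)$ with fluctuations of order $\sqrt{m}\,\log m$, making $S_m$ behave like a random walk whose $\limsup$ is $+\infty$. To make this rigorous I see two avenues. First, a \emph{conditional} one: assuming the Hardy--Littlewood prime $k$-tuple conjectures (or a Cram\'er--Granville probabilistic model for the primes), compute the second moment of $S_m$ over dyadic ranges and show it tends to infinity --- indeed like $p_m$ up to logarithmic factors --- the bounded-shift correlations of $(-1)^n g_n$ contributing only lower-order terms; a second-moment argument (or a weak law of the iterated logarithm for weakly dependent sums) then forces $\limsup S_m=\infty$. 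Second, an \emph{oscillation} one: apply Abel summation to $S_m$ against $\psi(x)-x$, and combine the classical $\Omega_{\pm}$ results for $\psi(x)-x$ with the Erd\H{o}s--Tur\'an theorem that $g_{n+1}-g_n$ changes sign infinitely often, aiming to show $S_m$ cannot stay one-sidedly bounded. One may also try a direct construction: use Theorem 1 and the known existence of arbitrarily long prime gaps to locate even-indexed blocks long enough to push $\xi$ past any prescribed level, after first securing a lower bound for $\xi(p_k)$ along a set of indices $k$ of positive density.

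The main obstacle is common to the oscillation avenue and the direct construction: ruling out a conspiracy in which exceptionally large gaps cluster systematically at even-indexed (or odd-indexed) primes --- equivalently, controlling the joint distribution of the pair $(n\bmod 2,\ g_n)$. No present technique handles this, and a bound of the shape $|S_m|\le M$ contradicts no currently established sign-change theorem for prime gaps, so an unconditional proof seems out of reach. The realistic outcome of this plan is therefore a proof conditional on the Hardy--Littlewood conjectures via the second-moment argument, supplemented by the heuristic that makes the conjecture believable in the first place. (The same reasoning shows the conjecture is morally equivalent to $\liminf_{x}\xi(x)=-\infty$, and presumably to a two-sided oscillation statement for $\xi$ at an appropriate scale.)
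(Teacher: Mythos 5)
The statement you were asked to prove is presented in the paper as a \emph{conjecture}; the paper offers no proof of it, so there is no argument of the author's to compare yours against. Your reading of the (awkwardly phrased) statement as ``$\xi$ is unbounded above'' is the natural one, and your reduction is correct and consistent with the paper's own Lemma~\ref{gensel4}: since $\iota(p_n)=(-1)^{\pi(p_n)}=(-1)^n$, one indeed has $\xi(p_{n+1})-\xi(p_n)=(-1)^n(g_n-2)$; the telescoping from $\xi(p_2)=1$, the identity $\sum_{n=2}^m(g_n-2)=p_{m+1}-2m-1$, and the $O(1)$ comparison between $\sup_{x}\xi(x)$ and $\sup_m\xi(p_m)$ (using that $\iota$ is constant on each block $[p_k,p_{k+1})$) are all right. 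So the conjecture is correctly identified with the assertion $\limsup_m\sum_{2\le n\le m}(-1)^n(g_n-2)=+\infty$.

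However, as you say yourself, the proposal stops exactly where the difficulty begins, so it is a reformulation rather than a proof. The step ``rule out a persistent correlation between $n\bmod 2$ and the size of $g_n$'' \emph{is} the entire content of the conjecture, and neither of your two avenues closes it: the second-moment computation is conditional on Hardy--Littlewood, and the oscillation route via $\psi(x)-x$ and the Erd\H{o}s--Tur\'an sign-change theorem does not control the signed sum $S_m$ --- sign changes of $g_{n+1}-g_n$ say nothing about whether the mass of large gaps accumulates at even indices, which is the conspiracy you would need to exclude. Likewise the ``direct construction'' via long gaps founders on the same point: you cannot choose the parity of the index at which a long gap occurs. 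What you have, then, is a correct and genuinely useful reduction together with an honest assessment that the reduced statement is open; that is a reasonable outcome for a statement the paper itself only conjectures, but it should be labelled as such and not as a proof.
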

\bigskip

Overall the second prime-index function is neither decreasing nor increasing. However we could be certain about the monotonicity behaviour of the second prime-index function if we restrict to the subsequence of the integers such as the primes. We ask the following question in that direction.

\begin{question}
Are there infinitely many primes $p,q$ with $p<q$ such that $\xi(p) \leq \xi(q)$?
\end{question}
\bigskip

Again by observing the distribution of the second prime-index function $\xi$, we notice that occasionally it behaves on some points like $\xi(x)\approx \frac{\pi(x)}{2}$. So we ask the broader question as follows:

\begin{question}
Are there infinitely many real numbers $x$ such that \begin{align}\xi(x)\approx \frac{\pi(x)}{2}?\nonumber
\end{align}
\end{question}

\section{Relationship with the theory of prime gaps}
In the spirit of relating the second prime-index function to the theory of prime gaps, we introduce the notion of oscillation on $\xi$.
\begin{definition}\label{oscillation}
Let $y\geq 2$, then we say $\xi$ oscillates at $y$ with period $M$ if there exist some $r_i\in \mathbb{R}^{+}$ such that \begin{align}\xi(x-r_i)=\xi(x+r_i),\nonumber
\end{align}where $M=\mathrm{min}\{r_i\}$.
\end{definition}
\bigskip
Geometrically, we could think of the notion of oscillation of $\xi$ at a point $x$ with period $t$ as the change in the gradient of $\xi$ of the line connecting the points $(x,\xi(x)$, $(x-t,\xi(x-t))$ and $(x,\xi(x))$, $(x+t,\xi(x+t))$ upto signs. Suppose $x$ is a point of oscillation of $\xi$ with period $t$. Then it follows that $\xi(x-t)=\xi(x+t)$. Thus we observe that the gradient of the line joining the point $(x,\xi(x))$ and $(x+t, \xi(x+t))$ is given by \begin{align}\mathcal{G}_{x,x+t}=\frac{\xi(x+t)-\xi(x)}{t}.\nonumber  
\end{align}Similarly the gradient of the line joining the point $(x,\xi(x))$ and $(x-t, \xi(x-t))$ is given by \begin{align}\mathcal{G}_{x-t,x}&=\frac{\xi(x)-\xi(x-t)}{t}\nonumber \\&=\frac{\xi(x)-\xi(x+t)}{t}\nonumber \\&=-\bigg(\frac{\xi(x+t)-\xi(x)}{t}\bigg)\nonumber \\&=-\mathcal{G}_{x,x+t}.\nonumber
\end{align}
\begin{remark}
Next we prove a result that suggests that there are infinitely many points of oscillations of the second prime-index function.
\end{remark}

\begin{theorem}\label{infinity}
There are infinitely many points $y\in \mathbb{R}$ such that $\xi(y-s)=\xi(y+s)$ for some $s\in \mathbb{N}$. In particular, there are infinitely many points of oscillations.
\end{theorem}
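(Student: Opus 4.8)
The plan is to produce, for every sufficiently large prime $p$, an explicit point $y$ together with an integer $s$ satisfying $\xi(y-s)=\xi(y+s)$; since there are infinitely many primes this at once yields infinitely many points of oscillation. The mechanism is that $\iota(n)=(-1)^{\pi(n)}$ is constant on each interval $[p_n,p_{n+1})$ of consecutive primes, so $\xi$ changes by $(-1)^n$ at each integer of that interval and reverses direction exactly when one crosses a prime. Hence a prime acts as a ``turning point'' of $\xi$, and just after turning $\xi$ briefly retraces the value it held just before — this is the cancellation I want to exploit, with $s=1$.

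Concretely, I would first record the trivial identity $\pi(p)=\pi(p-1)+1$, valid for every prime $p\geq 3$ (the only integer in $(p-1,p]$ is $p$, and it is prime). Therefore $\iota(p)=(-1)^{\pi(p)}=-(-1)^{\pi(p-1)}=-\iota(p-1)$, so the two summands gained in passing from $\xi(p-2)$ to $\xi(p)$ cancel:
\[
\xi(p)-\xi(p-2)=\iota(p-1)+\iota(p)=0 .
\]
Thus $\xi(p-2)=\xi(p)$ for every prime $p\geq 3$. Taking $y=p-1$ and $s=1\in\mathbb{N}$, this is precisely $\xi(y-s)=\xi(y+s)$, so each such $y$ is a point of oscillation.

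To be sure these points meet Definition \ref{oscillation}, i.e.\ that $y=p-1$ has a well-defined minimal period, I would check that no offset $r\in(0,1)$ works: since $\xi$ is constant on each half-open unit interval, $\xi(y-r)=\xi(p-2)$ and $\xi(y+r)=\xi(p-1)$ for $0<r<1$, while $\xi(p-1)-\xi(p-2)=\iota(p-1)=\pm 1\neq 0$. Hence the minimal period at $y=p-1$ is exactly $1$. Finally, since there are infinitely many primes $p\geq 3$ and the corresponding points $y=p-1$ are distinct, $\xi$ has infinitely many points of oscillation, which proves the theorem.

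The computation is elementary, so I do not expect a genuine obstacle; the only items needing a little care are the bookkeeping for $\pi(p-1)$ (which uses nothing beyond the definition of $\pi$) and the verification that the constructed $y$ actually satisfies Definition \ref{oscillation}, rather than merely $\xi(y-s)=\xi(y+s)$ for one value of $s$. I would add the caveat that this easy argument only yields oscillation points of period $1$; producing oscillation points whose period $s$ is unbounded would require balancing the signed prime-gap contributions over long symmetric windows, which is genuinely sensitive to the distribution of prime gaps and appears beyond the reach of this method.
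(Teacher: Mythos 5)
Your proof is correct, and it takes a genuinely different route from the paper's. The paper argues by contradiction: it assumes there are only finitely many oscillation points $y_1<\cdots<y_n$, infers that beyond $y_n+s$ the function $\xi$ must be strictly increasing, concludes that $\pi$ is eventually constant, and so contradicts the infinitude of primes; this is non-constructive and leans on the unjustified step that the absence of further oscillation points forces strict monotonicity of $\xi$. You instead exhibit the oscillation points explicitly: for each prime $p\geq 3$ the identity $\pi(p)=\pi(p-1)+1$ gives $\iota(p)=-\iota(p-1)$, hence $\xi(p)-\xi(p-2)=\iota(p-1)+\iota(p)=0$, so $y=p-1$ and $s=1$ work, and the infinitude of primes finishes the argument. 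The paper's own table confirms the pattern (e.g.\ $\xi(9)=\xi(11)=3$ and $\xi(15)=\xi(17)=5$). Your approach buys strictly more than the theorem asks for: an explicit infinite family of oscillation points, each of minimal period exactly $1$ (your check that no offset $r\in(0,1)$ works is the right way to meet Definition \ref{oscillation}), while avoiding the shaky monotonicity inference entirely. Your closing caveat that this method only produces period-$1$ oscillations is fair, but the statement as given does not require unbounded periods.
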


\begin{proof}
Suppose on the contrary that there are finitely many points of oscillations of $\xi$. Name them $y_1< y_2< \cdots <y_n$. It follows that there exist some least $s\in \mathbb{N}$ such that $\xi(y_n-s)=\xi(y_n+s)$. Without loss of generality, let $\xi(y_n)<\xi(y_{n}+s)$. Since any infinite sequence of the form $y_{r}<y_{r+1}<y_{r+2}<\cdots $ for $r>n$ are not points of oscillation of $\xi$, it follows that $\xi(y_{n}+s)<\xi(y_{n}+s+1)<\xi(y_{n}+s+2)<\xi(y_{n}+s+3)<\cdots $. It follows that $\pi(y_{n}+s)=\pi(y_{n}+s+1)=\pi(y_{n}+s+2)=\pi(y_{n}+s+3)=\cdots \pi(y_{n}+j)=\cdots$. This contradicts the infinitude of primes, thereby ending the proof.
\end{proof}

\begin{remark}
Next we prove a result that is redolent of  Betrand's postulate \cite{hildebrand2005introduction}, which asserts that we can find at least one prime in any interval of the form $(x,2x]$ for any $x\geq 1$. The result below can be seen as a variant and a strengthening of the postulate. 
\end{remark}

\begin{theorem}\label{bertrand}
Let $x,y\in \mathbb{R}$ with $x<y$. If $\xi(x)=\xi(y)$, then there exist at least a prime in the interval $(x,y]$.
\end{theorem}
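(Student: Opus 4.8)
The plan is to prove the contrapositive: assuming that $(x,y]$ contains no prime, I will show $\xi(x)\neq \xi(y)$. The whole argument rests on one elementary remark about $\iota$, namely that it changes sign exactly at the primes. Indeed, for each integer $n\geq 2$ one has $\pi(n)-\pi(n-1)=1$ if $n$ is prime and $\pi(n)-\pi(n-1)=0$ otherwise; hence $\iota(n)=-\iota(n-1)$ when $n$ is prime and $\iota(n)=\iota(n-1)$ when $n$ is composite. In particular $\iota$ is constant along any run of consecutive integers that contains no prime — this is the structural fact I would isolate first.

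Next I would write the difference of the partial sums as a sum over the integers of the interval,
\begin{align}
\xi(y)-\xi(x)=\sum_{x<n\leq y}\iota(n).\nonumber
\end{align}
Under the hypothesis that $(x,y]$ is prime-free, every integer $n$ with $x<n\leq y$ satisfies $\pi(n)=\pi(x)$, because the primes $\leq n$ are precisely the primes $\leq x$ together with the (empty) set of primes in $(x,n]$. Consequently $\iota(n)=(-1)^{\pi(x)}=:\varepsilon$ for all such $n$, where $\varepsilon\in\{-1,1\}$ is a single fixed sign, and the sum collapses:
\begin{align}
\xi(y)-\xi(x)=\varepsilon\, N,\qquad N:=\#\{\,n\in\mathbb{Z}:\ x<n\leq y\,\}.\nonumber
\end{align}

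To finish, I would note that $N\geq 1$, so $\xi(y)-\xi(x)=\pm N\neq 0$, contradicting $\xi(x)=\xi(y)$; this proves the theorem, and as a by-product it exhibits the slightly stronger fact that $\xi$ is strictly monotone on every block of integers lying strictly between two consecutive primes.

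The step that needs care — and where I expect the only real obstacle to lie — is precisely the claim $N\geq 1$, i.e.\ that $(x,y]$ genuinely contains an integer. This is automatic when $x,y$ are integers, and more generally whenever $y\geq x+1$, but it can fail for a very short interval (for instance $x=2.1$, $y=2.5$, where $\xi(x)=\xi(y)=0$ and $(x,y]$ is prime-free). So in writing up the result I would either take $x,y\in\mathbb{Z}$ or add the harmless hypothesis that $(x,y]$ contains an integer; with that in place, the three steps above constitute a complete proof, and no deeper input — not even Bertrand's postulate itself — is required.
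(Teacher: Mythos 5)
Your proof is correct and, at its core, is the same argument the paper gives: a sum of $\pm 1$'s over the integers of $(x,y]$ can vanish only if both signs occur, i.e.\ only if $\pi$ changes parity inside the interval, i.e.\ only if the interval contains a prime. You organize this as a contrapositive and make the mechanism explicit --- $\iota$ is constant along any prime-free run of integers, so all increments share a single sign $\varepsilon$ and $\xi(y)-\xi(x)=\varepsilon N$ --- whereas the paper merely asserts that the values ``are evenly distributed'' and stops, so your write-up is the rigorous version of the same idea rather than a new one. The one substantive point you add is the degenerate case $N=0$: when $(x,y]$ contains no integer, the vanishing sum is empty and the conclusion fails; your example $x=2.1$, $y=2.5$ gives $\xi(x)=\xi(y)=0$ with no prime in $(x,y]$, so the theorem as stated for arbitrary reals is literally false, and the paper's proof silently assumes the sum is nonempty. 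Your proposed repair (take $x,y\in\mathbb{Z}$, or require that $(x,y]$ contain an integer, e.g.\ $y\geq x+1$) is exactly the right fix, and with it your three steps constitute a complete proof; note that the same hypothesis should then be carried into the later results that invoke this theorem (the oscillation propositions and Lemma \ref{key lemma}), although there the points involved are integers or primes, so no harm is done.
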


\begin{proof}
Suppose $x,y\in \mathbb{R}$ and let $\xi(x)=\xi(y)$. Then it follows that \begin{align}\sum \limits_{x<n\leq y}(-1)^{\pi(n)}=0.\nonumber
\end{align}It follows that the sequence $\{1,-1\}$ are evenly distributed for points on $(x,y]$ and,  hence the parity of the prime counting function $\pi(n)$ for points in $(x,y]$ are equidistributed. The result follows immediately from this fact. 
\end{proof}

\begin{proposition}
Let $x$ be a point of oscillation of $\xi$ with period $t\geq 1$. If the interval $(x-t,x+t)$ contains no prime, then $x+t$ must necessarily be prime.
\end{proposition}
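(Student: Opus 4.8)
My approach would be to reduce the statement directly to Theorem~\ref{bertrand}. By Definition~\ref{oscillation}, the hypothesis that $x$ is a point of oscillation of $\xi$ with period $t$ entails in particular that $\xi(x-t)=\xi(x+t)$, and since $t\geq 1>0$ we have $x-t<x+t$. Applying Theorem~\ref{bertrand} to the pair $x-t<x+t$ then produces at least one prime in the interval $(x-t,x+t]$.

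Next I would bring in the remaining hypothesis, namely that the open interval $(x-t,x+t)$ contains no prime. Since $(x-t,x+t]$ is obtained from $(x-t,x+t)$ by adjoining only the point $x+t$, the prime furnished by Theorem~\ref{bertrand} can lie nowhere but at $x+t$. Hence $x+t$ is prime (and, in passing, an integer), which is precisely the claim.

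Unwound, this is nothing more than the parity/telescoping argument behind Theorem~\ref{bertrand}: one has $\xi(x+t)-\xi(x-t)=\sum_{x-t<n\leq x+t}(-1)^{\pi(n)}$, and if no prime lay in $(x-t,x+t]$ then $\pi$ would be constant on the integers of that range, so this sum would equal $\pm N$ with $N\geq 1$ the number of such integers (the interval has length $2t\geq 2$), contradicting $\xi(x-t)=\xi(x+t)$. I do not expect a genuine obstacle here; the only points calling for a little care are the bookkeeping of whether $x\pm t$ happen to be integers, so that the half-open interval in Theorem~\ref{bertrand} is invoked correctly, and the remark that the hypotheses already force $x-t\geq 2$ --- since $x\geq 2$ and $t\geq 1$ give $x+t\geq 3$, and $x-t<2$ would place the prime $2$ inside $(x-t,x+t)$ --- so that $\xi(x-t)$ is a bona fide partial sum and the telescoping is legitimate.
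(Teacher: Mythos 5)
Your proof is correct and follows exactly the paper's argument: invoke the definition of oscillation to get $\xi(x-t)=\xi(x+t)$, apply Theorem~\ref{bertrand} to obtain a prime in $(x-t,x+t]$, and conclude that the prime must be $x+t$ since the open interval is prime-free. The extra unwinding of the telescoping/parity argument and the integrality bookkeeping are not in the paper but do not change the route.
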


\begin{proof}
Let $x$ be a point of oscillation of $\xi$ with period $t$. Then it follows from Definition \ref{oscillation} that $\xi(x-t)=\xi(x+t)$. Then it follows  by Theorem \ref{bertrand} that the interval $(x-t,x+t]$ contains at least one prime. Since the interval $(x-t,x+t)$ contains no prime, it follows that $x+t$ must be prime and the result follows immediately.
\end{proof}

\begin{remark}
Next we strengthened Theorem \ref{bertrand} by imposing some extra conditions for points of coincidences of the second prime-index function $\xi$.
\end{remark}

\begin{theorem}
Let $[a,b]\subset \mathbb{R}$. If there exist finitely many points $x_1,x_2,\ldots, x_n \in [a,b]$ such that $\xi(x_1)=\xi(x_2)=\ldots =\xi(x_n)$, then the interval $[a,b]$ contains finitely many primes.
\end{theorem}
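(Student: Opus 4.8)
The plan is to argue by contradiction, building on the monotonicity of $\xi$ between consecutive primes that already underlies the proof of Theorem \ref{bertrand}. First I would record the local shape of $\xi$: if $p<p'$ are consecutive primes, then $\pi$ is constant on the integers of $[p,p')$, so $\iota$ is constant there, and $\iota$ takes the opposite sign on the next block; hence $\xi$ is monotone with unit steps on each inter-prime block and reverses its direction of monotonicity at (or immediately next to) every prime. Consequently, in a neighbourhood of each prime $q$ one can exhibit two distinct points $u<v$ with $\xi(u)=\xi(v)$: $\xi$ rises (falls) on one side of the turning point and falls (rises) on the other, so some integer value is swept twice, once in $(u,q]$ and once in $[q,v)$. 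This is exactly compatible with Theorem \ref{bertrand}, since the prime $q$ lies between $u$ and $v$.

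Next, suppose for contradiction that $[a,b]$ contains infinitely many primes $q_1<q_2<\cdots$. Applying the previous paragraph to each $q_k$ lying sufficiently far inside $[a,b]$ produces points $u_k<v_k$ of $[a,b]$ with $\xi(u_k)=\xi(v_k)$; passing if necessary to a subsequence of primes tending to infinity, we may take the $u_k$ (say) to be pairwise distinct. Thus $[a,b]$ contains infinitely many points, each of which shares a value of $\xi$ with some other point of $[a,b]$, contradicting the hypothesis that only finitely many points of $[a,b]$ may carry a common value of $\xi$. Hence $[a,b]$ contains only finitely many primes.

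I expect the main obstacle to be turning the informal statement ``there are two points $u<v$ near $q$ with $\xi(u)=\xi(v)$'' into a clean, parity-correct claim — in particular handling twin-prime clusters, where consecutive turning points are close together, and pinning down the exact location of the extremum (which sits at $q$ or at $q-1$ according to the parity of $\pi(q)$). Once the monotone-sweep picture is fixed this is routine, since $\xi$ genuinely changes by at least $1$ in absolute value across every prime, so no turning point can be ``flat''. The one conceptual point worth flagging at the outset is that the hypothesis is to be read as saying that the level sets $\{x\in[a,b]:\xi(x)=c\}$ collectively reduce to finitely many points, i.e. that $[a,b]$ carries only finitely many coincidence points of $\xi$; with that reading the statement is precisely the contrapositive of what the argument above establishes.
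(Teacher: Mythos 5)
Your proposal does not match what the paper is actually doing here, and I think the mismatch matters. The paper's own proof is a one-line application of Theorem \ref{bertrand}: order the coincidence points $x_1<x_2<\cdots<x_n$ and apply that theorem to each pair of consecutive points, obtaining at least one prime in each of $(x_1,x_2],(x_2,x_3],\ldots,(x_{n-1},x_n]$, hence at least $n-1$ primes in $[a,b]$. In other words, despite the wording ``contains finitely many primes,'' the content the paper extracts from the hypothesis is a \emph{lower} bound on the number of primes in terms of the number of coincidence points. Your proof goes in the opposite direction: you reinterpret the hypothesis as an upper bound on the number of coincidence points and try to deduce an upper bound on the number of primes. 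That is a different theorem, and the reinterpretation is doing real work --- the hypothesis as written is an existence statement (``there exist finitely many points with equal $\xi$-values''), which your contrapositive reading silently converts into a universal restriction on all level sets of $\xi$ on $[a,b]$.

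There is also a structural problem with your argument even on its own terms: the contradiction hypothesis ``$[a,b]$ contains infinitely many primes $q_1<q_2<\cdots$'' is vacuous, since $[a,b]$ is a bounded interval and therefore contains finitely many primes no matter what. (For the same reason the theorem as literally stated is trivially true, which is a symptom of the imprecise wording rather than a virtue of either proof.) Because the assumption for contradiction can never hold, your careful local analysis of the turning points of $\xi$ near a prime --- which is correct, and is essentially the mechanism behind the paper's twin-prime lemma --- never actually gets used; the argument collapses to ``bounded intervals contain finitely many primes.'' To recover the content the paper intends, you should instead run the direct argument: each consecutive pair of coincidence points traps a prime by Theorem \ref{bertrand}, so $n$ coincidence points force at least $n-1$ primes into $[a,b]$.
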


\begin{proof}
Let $[a,b]\subset \mathbb{R}$. Suppose $x_1,x_2,\ldots x_n\in [a,b]$ and $x_1<x_2<\ldots<x_n$ such that $\xi(x_1)=\xi(x_2)=\ldots =\xi(x_n)$. Then it follows from Theorem \ref{bertrand} that there is at least a prime in each of the intervals $(x_1,x_2], (x_2,x_3], \ldots (x_{n-1},x_n]$, and the result follows immediately.
\end{proof}
\bigskip

We highlight the next result, which reinforces the very fact that more and more coincidences of the primes on the second prime-index function has a profound connection with primes in an arithmetic progression.

\begin{lemma}\label{key lemma}
Let $p_1<p_2<\ldots <p_n$ be a sequence of consecutive primes. If $\xi(p_1)=\xi(p_2)=\ldots =\xi(p_n)$, then $p_1<p_2<\ldots <p_n$ must be an arithmetic progression of common difference $2$.
\end{lemma}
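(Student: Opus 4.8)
The plan is to control how $\xi$ changes across consecutive primes. The crucial observation is that $\pi$ is constant on each block $[p_k,p_{k+1})$, so that $\iota=(-1)^{\pi}$ is constant there and flips sign precisely at $p_{k+1}$.

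First I would derive a closed form for the increment $\xi(p_{k+1})-\xi(p_k)$ attached to a pair of consecutive primes. Splitting $\xi(p_{k+1})-\xi(p_k)=\sum_{p_k<n\le p_{k+1}}(-1)^{\pi(n)}$ into the block $p_k<n<p_{k+1}$, where $\pi(n)=\pi(p_k)$, plus the single term $n=p_{k+1}$, where $\pi(p_{k+1})=\pi(p_k)+1$, one obtains
\[
\xi(p_{k+1})-\xi(p_k)=(p_{k+1}-p_k-1)(-1)^{\pi(p_k)}+(-1)^{\pi(p_k)+1}=(p_{k+1}-p_k-2)(-1)^{\pi(p_k)}.
\]
This is the local identity behind the relation $|\xi(p_{n+1})-\xi(p_n)|+2=p_{n+1}-p_n$ quoted in the abstract.

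Next, the hypothesis $\xi(p_1)=\xi(p_2)=\cdots=\xi(p_n)$ gives $\xi(p_{k+1})-\xi(p_k)=0$ for each $k$ with $1\le k\le n-1$. Since $(-1)^{\pi(p_k)}\neq 0$, the identity above forces $p_{k+1}-p_k-2=0$, i.e.\ $p_{k+1}-p_k=2$ for every such $k$. Hence $p_1<p_2<\cdots<p_n$ is an arithmetic progression with common difference $2$, as claimed.

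I do not expect a genuine obstacle here; the one point needing a word of care is the degenerate block $p_k=2$, where $p_{k+1}=3$ and the open interval $(p_k,p_{k+1})$ is empty. The identity still applies and yields $\xi(p_{k+1})-\xi(p_k)=-(-1)^{\pi(2)}\neq 0$, so this case cannot arise under the hypothesis (consistently with the fact that a run of length $\ge 2$ must sit inside the twin chain and cannot begin at $2$). Alternatively one could route the argument through Theorem \ref{bertrand}: $\xi(p_k)=\xi(p_{k+1})$ forces the parity of $\pi$ to be balanced on $(p_k,p_{k+1}]$, which again pins the gap down to $2$; but the direct computation seems cleanest.
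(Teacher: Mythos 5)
Your proof is correct, and it is actually tighter than the argument the paper gives for this lemma. The paper's proof routes through Theorem \ref{bertrand}: from $\xi(p_k)=\xi(p_{k+1})$ it concludes that each interval $(p_k,p_{k+1}]$ contains a prime (which is vacuous here, since $p_{k+1}$ itself is that prime), notes that the open intervals contain no primes because the primes are consecutive, and then simply asserts that the gaps must all equal $2$ --- the decisive step is not actually justified there. You instead derive the exact increment identity
\begin{align}
\xi(p_{k+1})-\xi(p_k)=(p_{k+1}-p_k-2)\,(-1)^{\pi(p_k)},\nonumber
\end{align}
which is precisely the content of the paper's later Lemma \ref{gensel4} (stated and proved only after this lemma), and read off $p_{k+1}-p_k=2$ from the vanishing of the left-hand side. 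This buys you a complete, self-contained proof of the step the paper leaves implicit, and as a bonus it reproves Lemma \ref{gensel4} and the relation $|\xi(p_{n+1})-\xi(p_n)|+2=p_{n+1}-p_n$ from the abstract. Your handling of the degenerate pair $(2,3)$, where the gap is $1$ and the identity gives a nonzero increment so the hypothesis cannot hold, is a point the paper does not address at all; strictly speaking the lemma as stated is false for the pair $p_1=2$, $p_2=3$ only in the vacuous sense that its hypothesis fails there, and your remark makes that explicit.
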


\begin{proof}
Consider the sequence of consecutive primes $p_1<p_2<\ldots p_n$ such that $\xi(p_1)=\xi(p_2)=\ldots =\xi(p_n)$. Then it follows from Theorem \ref{bertrand} that the intervals $(p_1,p_2], (p_2,p_3],\ldots (p_{n-1},p_n]$ each contains at least one prime. Since $p_1<p_2<\cdots <p_n$ are consecutive primes, it follows that there are no primes in each of the open intervals $(p_1,p_2), (p_2,p_3), \ldots (p_{n-1},p_n)$. Since $\xi(p_1)=\xi(p_2)=\ldots =\xi(p_n)$, it follows that $|p_1-p_2|=|p_2-p_3|=\cdots =|p_{n-1}-p_n|=2$ and the result follows immediately.
\end{proof}

\begin{remark}
Lemma \ref{key lemma} tells us that a good way to search for twin primes is to seek out for sequence of  consecutive primes whose value on the second prime-index function coincides. Next we extend this result to consecutive primes with gaps bigger than $2$. This extension also relates the second and the first prime index function to the gap of primes.
\end{remark}
\bigskip

\begin{lemma}\label{gensel4}
Let $p_{i}<p_{i+1}$ be any two consecutive primes such that $p_{i+1}-p_i=d$ for $d\geq 2$, then we have \begin{align}\xi(p_{i+1})=\xi(p_{i})+\iota(p_i)(d-2).\nonumber
\end{align}
\end{lemma}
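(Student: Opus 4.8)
The plan is to compute $\xi(p_{i+1}) - \xi(p_i)$ directly from the definition as a sum of $\iota(n)$ over the integers $n$ with $p_i < n \le p_{i+1}$, and to exploit the fact that $\pi(n)$ is constant on this range except at the endpoint. First I would write
\begin{align}
\xi(p_{i+1}) - \xi(p_i) = \sum_{p_i < n \le p_{i+1}} \iota(n) = \sum_{p_i < n \le p_{i+1}} (-1)^{\pi(n)}. \nonumber
\end{align}
Since $p_i$ and $p_{i+1}$ are consecutive primes, there are no primes strictly between them, so $\pi(n) = \pi(p_i)$ for every $n$ with $p_i \le n < p_{i+1}$, and $\pi(p_{i+1}) = \pi(p_i) + 1$. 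The range $p_i < n \le p_{i+1}$ contains exactly $d$ integers, namely $p_i+1, p_i+2, \dots, p_i + (d-1) = p_{i+1} - 1$, together with $p_{i+1}$ itself.

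Next I would split off the last term. For the $d-1$ integers $n = p_i + 1, \dots, p_{i+1} - 1$ we have $(-1)^{\pi(n)} = (-1)^{\pi(p_i)} = \iota(p_i)$, so their contribution is $(d-1)\iota(p_i)$. For $n = p_{i+1}$ we have $(-1)^{\pi(p_{i+1})} = (-1)^{\pi(p_i)+1} = -\iota(p_i)$. Adding these gives
\begin{align}
\xi(p_{i+1}) - \xi(p_i) = (d-1)\iota(p_i) - \iota(p_i) = \iota(p_i)(d-2), \nonumber
\end{align}
which rearranges to the claimed identity $\xi(p_{i+1}) = \xi(p_i) + \iota(p_i)(d-2)$.

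I would also note the edge case $d = 2$ (twin primes): then the sum over the interval is $\iota(p_i) + (-\iota(p_i)) = 0$, consistent with $\iota(p_i)(d-2) = 0$ and recovering Lemma \ref{key lemma}. There is essentially no obstacle here; the only point requiring care is the bookkeeping of exactly which integers lie in the half-open interval $(p_i, p_{i+1}]$ and confirming that $\pi$ jumps precisely once, at $n = p_{i+1}$, so that the count of $+\iota(p_i)$ terms is $d-1$ and there is a single $-\iota(p_i)$ term. This is immediate from the definition of consecutive primes, so the proof is a short direct computation.
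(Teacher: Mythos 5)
Your proof is correct and takes essentially the same route as the paper's: both arguments observe that the $d-1$ integers strictly between $p_i$ and $p_{i+1}$ each contribute $\iota(p_i)$ (since $\pi$ is constant there) and the endpoint $p_{i+1}$ contributes $-\iota(p_i)$, giving $\iota(p_i)(d-1)-\iota(p_i)=\iota(p_i)(d-2)$. Your write-up is merely more explicit about the bookkeeping of which integers lie in $(p_i,p_{i+1}]$.
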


\begin{proof}
First let $p_{i}<p_{i+1}$ such that $p_{i+1}-p_i=d$, then  it follows that the interval $(p_i,p_{i+1})$ is free of primes and it must certainly be that \begin{align}\xi(p_i+d-1)=\xi(p_i)+\iota(p_i)(d-1)\label{gensel}
\end{align}Again reinforcing the assumption that the primes are consecutive, then it follows by Definition \ref{first} and  \ref{gensel} \begin{align}\xi(p_{i+1})&=\xi(p_i+d-1)+\iota(p_{i+1})\nonumber \\&=\xi(p_i)+\iota(p_i)(d-1)+\iota(p_{i+1})\nonumber \\&=\xi(p_i)+\iota(p_i)(d-2)\nonumber
\end{align}thereby establishing the relation.
\end{proof}
\bigskip
Cramer's conjecture is the assertion that the gaps between consecutive primes grow at poly-logarithmic rate.  More precisely for any sequence of consecutive primes $p_n<p_{n+1}$, then we have \begin{align}p_{n+1}-p_n\ll (\log p_n)^2.\nonumber
\end{align}The conjecture can be expressed in terms of the primes index function. Recall the relation in Lemma \ref{gensel4}, then we have \begin{align}|\xi(p_{n+1})-\xi(p_n)|+2=d,\nonumber
\end{align}where $p_{n+1}-p_n=d$. Thus Cramer's conjecture can be restated as follows:
\bigskip

\begin{conjecture}~(Cramer)
Let $p_n<p_{n+1}$ be any sequence of consecutive primes. Then we have \begin{align}\xi(p_{n+1})-\xi(p_{n})\ll (\log p_n)^2.\nonumber
\end{align}
\end{conjecture}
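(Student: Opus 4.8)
The plan is to note that the final ``conjecture'' is actually an immediate corollary of Lemma \ref{gensel4} together with the classical statement of Cramér's conjecture, so the ``proof'' is really a verification that the two formulations are equivalent. First I would invoke Lemma \ref{gensel4}: for consecutive primes $p_n<p_{n+1}$ with $d=p_{n+1}-p_n$ we have $\xi(p_{n+1})=\xi(p_n)+\iota(p_n)(d-2)$, hence $\xi(p_{n+1})-\xi(p_n)=\iota(p_n)(d-2)$. Since $\iota(p_n)=(-1)^{\pi(p_n)}=\pm 1$, taking absolute values gives $|\xi(p_{n+1})-\xi(p_n)|=|d-2|=d-2$ (the last equality because $d\geq 2$ always). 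Therefore $|\xi(p_{n+1})-\xi(p_n)|+2=d=p_{n+1}-p_n$, which is exactly the identity quoted in the paragraph preceding the conjecture.

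Next I would close the loop with the asymptotic reformulation. The classical Cramér conjecture asserts $p_{n+1}-p_n\ll(\log p_n)^2$. Substituting the identity just derived, this is equivalent to $|\xi(p_{n+1})-\xi(p_n)|+2\ll(\log p_n)^2$, and since the additive constant $2$ is absorbed into the $\ll$ notation, this is in turn equivalent to $|\xi(p_{n+1})-\xi(p_n)|\ll(\log p_n)^2$, and finally to $\xi(p_{n+1})-\xi(p_n)\ll(\log p_n)^2$ because a bound on the absolute value is the same as a bound on the quantity itself in Vinogradov notation. So the statement labelled ``Conjecture (Cramér)'' holds if and only if Cramér's conjecture holds; one is simply a restatement of the other through Lemma \ref{gensel4}.

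I should emphasize that there is no genuine obstacle to carrying out this argument, since nothing beyond Lemma \ref{gensel4} and elementary manipulation of the $\ll$ symbol is needed; the only ``hard part'' is the one that is not being attempted here, namely proving Cramér's conjecture itself, which remains open. Thus the appropriate write-up is short: state the chain of equivalences, cite Lemma \ref{gensel4} for the key algebraic identity, and remark that the reduction transfers the entire difficulty to obtaining sufficiently strong upper bounds for the second prime index function $\xi$ evaluated at consecutive primes. This is precisely the reduction advertised in the abstract, so the proof proposal and the paper's stated goal coincide.
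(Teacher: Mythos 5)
Your proposal is correct and follows essentially the same route as the paper: the statement is a conjecture (equivalent to Cramér's conjecture, hence open), and the paper likewise only establishes the equivalence by deriving $|\xi(p_{n+1})-\xi(p_n)|+2=p_{n+1}-p_n$ from Lemma \ref{gensel4} and substituting into the classical bound $p_{n+1}-p_n\ll(\log p_n)^2$. Your identification that the only content here is the reformulation, with the genuine difficulty untouched, matches the paper exactly.
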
 
\bigskip

\begin{remark}
A good way of attacking Cramer's conjecture is to establish a good estimate for the second prime index function.
\end{remark}
\bigskip

It is known that there are infinitely many primes in arithmetic progression \cite{May}. Yet very little is known concerning their local distribution; that is their distribution in small intervals. Next we state a result which in essence is a consequence of Theorem \ref{key lemma}.
 
\begin{corollary}
Let $[a,b]\subset \mathbb{R}$. If $[a,b]$ contains finitely many consecutive primes $p_1<p_2<\ldots <p_n$ such that $\xi(p_1)=\xi(p_2)=\ldots =\xi(p_n)$, then $[a,b]$ contains primes in an arithmetic progression. 
\end{corollary}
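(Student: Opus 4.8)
The plan is to read this off directly from Lemma~\ref{key lemma}. First I would unpack the hypothesis: we are handed consecutive primes $p_1<p_2<\cdots<p_n$, all lying in $[a,b]$, with $\xi(p_1)=\xi(p_2)=\cdots=\xi(p_n)$. Since consecutiveness and the equality of the $\xi$-values are exactly the hypotheses of Lemma~\ref{key lemma}, that lemma applies verbatim and tells us that $p_1<p_2<\cdots<p_n$ is an arithmetic progression of common difference $2$. Because each $p_i$ belongs to $[a,b]$ by assumption, this arithmetic progression of primes is contained in $[a,b]$, which is precisely the assertion to be proved. So the proof is a one-line invocation of Lemma~\ref{key lemma} followed by the trivial observation that the $p_i$ stay inside the interval.

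The one place I would be slightly careful is the degenerate range $n\le 2$: for $n=1$ the notion of arithmetic progression is vacuous and for $n=2$ any two reals form a (trivial) $2$-term progression, so for the statement to carry genuine content one should implicitly take $n\ge 3$. In that regime the conclusion of Lemma~\ref{key lemma} produces a nonconstant progression of primes with gap exactly $2$ sitting in $[a,b]$ — equivalently, a chain of overlapping twin-prime pairs — which is the substantive output of the corollary. I would add a sentence making this explicit so the reader is not misled by the weak-sounding phrase ``contains primes in an arithmetic progression''.

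I do not anticipate any real obstacle, since the corollary is essentially a repackaging of Lemma~\ref{key lemma} with the extra bookkeeping that the primes are localized to a prescribed interval. The only genuine subtlety is recognizing that no strengthening is being claimed: if one tried to drop the word \emph{consecutive}, or to extract arbitrarily long progressions, the argument would collapse, because Lemma~\ref{key lemma} relies on Theorem~\ref{bertrand} together with the fact that there are no primes strictly between consecutive primes, and it is exactly this combination that forces each gap to equal $2$. Hence the main ``difficulty'' is conceptual rather than technical — making sure the statement and its proof are advertised at the correct strength.
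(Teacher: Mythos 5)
Your proof is correct and is essentially identical to the paper's: both simply invoke Lemma~\ref{key lemma} to conclude that the consecutive primes with equal $\xi$-values form an arithmetic progression (of common difference $2$), which then lies in $[a,b]$ by hypothesis. Your added caveat about the degenerate cases $n\le 2$ is a reasonable clarification but does not change the argument.
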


\begin{proof}
Suppose $p_1<p_2<\cdots <p_n$ are primes in $[a,b]$ such that $\xi(p_1)=\xi(p_2)=\ldots =\xi(p_n)$. Then then the sequence of prime $p_1<p_2<\cdots <p_n$ must be an arithmetic progression, and the result follows immediately.
\end{proof}
\bigskip

The even Goldbach conjecture is another long-standing problem in mathematics. It predicts that every even number can be written as a sum of two primes. There are as many formulations of this problem, including quantitative versions \cite{iwaniec2004analytic}. We prove a result related to this conjecture, using the notion of oscillation at a point.

\begin{theorem}
Let $x\in \mathbb{N}$ be a point of oscillation of $\xi$ with period $t\in \mathbb{N}$. If $x-t$ is prime and the open interval $(x-t,x+t)$ contains no prime, then $2x$ can be written as a sum of two primes. 
\end{theorem}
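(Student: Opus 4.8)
The plan is to reduce the statement directly to the Proposition on points of oscillation whose symmetric window has empty interior, so almost all the work has already been done earlier in the paper. First I would unpack the hypotheses: since $x\in\mathbb{N}$ is a point of oscillation of $\xi$ with period $t\in\mathbb{N}$, Definition \ref{oscillation} gives the coincidence $\xi(x-t)=\xi(x+t)$, and since $x,t$ are integers the quantities $x-t$ and $x+t$ are integers, so it makes sense to speak of their primality. Applying Theorem \ref{bertrand} to the pair $x-t<x+t$, the half-open interval $(x-t,x+t]$ is forced to contain at least one prime.

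Next I would feed in the hypothesis that the open interval $(x-t,x+t)$ is prime-free. Every prime guaranteed in $(x-t,x+t]$ by the previous step lies either in the open interval $(x-t,x+t)$ or equals the right endpoint $x+t$; the first alternative is excluded, so necessarily $x+t$ is prime. (This is precisely the conclusion of the Proposition preceding the present theorem, so one may simply cite it rather than re-run the argument.)

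Finally, combining this with the standing hypothesis that $x-t$ is prime, I would write $2x=(x-t)+(x+t)$, exhibiting $2x$ as a sum of the two primes $x-t$ and $x+t$, which is exactly the asserted Goldbach-type representation. Since $t\geq 1$ we also get $x-t\neq x+t$, so these are two distinct primes, although distinctness is not needed for the conclusion.

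There is no genuine analytic obstacle here; the only points requiring care are purely bookkeeping ones. One must apply Theorem \ref{bertrand} to the correct half-open interval $(x-t,x+t]$ (not the open interval), and one should note that the oscillation hypothesis is what supplies the equality $\xi(x-t)=\xi(x+t)$ needed to invoke that theorem. The "hard part", such as it is, is just the observation that a prime is unavoidably forced into the symmetric window around $x$ and that emptiness of the interior pins it to the endpoint $x+t$, matching it with the prime already assumed at $x-t$.
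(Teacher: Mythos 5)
Your proposal is correct and follows essentially the same route as the paper: invoke the oscillation definition to get $\xi(x-t)=\xi(x+t)$, apply Theorem \ref{bertrand} to force a prime into $(x-t,x+t]$, use the prime-free interior to pin it at $x+t$, and conclude with $2x=(x-t)+(x+t)$. Your observation that one may simply cite the earlier Proposition for the step identifying $x+t$ as prime is a nice economy, but the argument is otherwise the same.
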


\begin{proof}
Suppose $x\in \mathbb{N}$ is a point of oscillation of $\xi$ with period $t\in \mathbb{N}$. Then it follows by Definition \ref{oscillation} that $\xi(x-t)=\xi(x+t)$. Again it follows by Theorem \ref{bertrand} that there exist at least one prime in the interval $(x-t,x+t]$. Since there are no primes in the interval $(x-t,x+t)$, it follows that $x+t$ must neccessarily be prime. Since $x-t$ is prime, it follows that the representaion \begin{align}2x=(x-t)+(x+t)\nonumber
\end{align}is a partition into two primes, thereby ending the proof.
\end{proof}

\begin{remark}
Next we prove a result which tells us that points of oscillations with with period can never be prime.
\end{remark}

\begin{theorem}
Let $x$ be a point of oscillation of $\xi$ with period $1$. Then $x+1$ must necessarily be prime.
\end{theorem}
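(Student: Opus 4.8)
The plan is to boil the hypothesis down to the single identity $\xi(x-1)=\xi(x+1)$ and then recognise $x+1$ as prime by a parity computation on $\pi$. Throughout I take $x$ to be a natural number with $x\geq 2$, in line with the neighbouring theorems and with Definition \ref{oscillation}; this restriction is genuinely needed, since for a non-integer $x$ the equality $\xi(x-1)=\xi(x+1)$ can still hold while $x+1$ is not even an integer. Because $x$ is a point of oscillation of $\xi$ with period $1$, Definition \ref{oscillation} supplies admissible values $r_i$ with $\xi(x-r_i)=\xi(x+r_i)$ and $\min\{r_i\}=1$; in particular $r_i=1$ is admissible, so $\xi(x-1)=\xi(x+1)$.

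The core of the argument is a telescoping identity across the three consecutive integers $x-1,x,x+1$. By Definition \ref{first},
\[
\xi(x+1)=\xi(x-1)+\iota(x)+\iota(x+1).
\]
Combining this with $\xi(x-1)=\xi(x+1)$ forces $\iota(x)+\iota(x+1)=0$, i.e. $(-1)^{\pi(x)}=-(-1)^{\pi(x+1)}$, so $\pi(x+1)-\pi(x)$ is odd. Since $\pi$ increases by at most one at each integer, $\pi(x+1)-\pi(x)\in\{0,1\}$, and the only odd member of that set is $1$; hence $\pi(x+1)-\pi(x)=1$, which is precisely the assertion that $x+1$ is prime. (Equivalently, in the style of the preceding Proposition: apply Theorem \ref{bertrand} to $x-1<x+1$ to get a prime in $(x-1,x+1]$, then observe that if $x+1$ were not prime we would have $\iota(x+1)=\iota(x)$ and therefore $\xi(x+1)=\xi(x-1)+2\iota(x)\neq\xi(x-1)$, a contradiction.)

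I do not anticipate a real obstacle here: the whole weight rests on the elementary parity identity above. The only two points that warrant a line of care are (i) confirming that the minimum period is truly attained at $r=1$ — which is automatic for integer $x\geq 2$, because for any $0<r<1$ one has $\xi(x-r)=\xi(x-1)$ while $\xi(x+r)=\xi(x)=\xi(x-1)+\iota(x)\neq\xi(x-1)$, so no smaller period is available — and (ii) the implicit hypothesis $x\in\mathbb{N}$ flagged at the outset. Once these are pinned down, the conclusion is immediate.
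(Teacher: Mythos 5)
Your proof is correct, and it is a genuinely cleaner route than the one in the paper. The paper first invokes Theorem \ref{bertrand} to place a prime in $(x-1,x+1]$, then supposes $x$ (rather than $x+1$) is that prime and splits into two cases according to whether $\xi(x-2)<\xi(x-1)$ or $\xi(x-2)>\xi(x-1)$, deriving $\xi(x+1)=L\pm 2\neq L$ in each case. You bypass all of that: the telescoping identity $\xi(x+1)=\xi(x-1)+\iota(x)+\iota(x+1)$ turns the hypothesis into $\iota(x)+\iota(x+1)=0$, i.e.\ $\pi(x+1)-\pi(x)$ odd, and since this difference lies in $\{0,1\}$ it must equal $1$, which \emph{is} the statement that $x+1$ is prime --- no appeal to Theorem \ref{bertrand}, no contradiction, no case split. (Your parenthetical variant is essentially the paper's argument with the case analysis compressed.) Two further merits of your write-up: you make explicit the hypothesis $x\in\mathbb{N}$, $x\geq 2$, without which ``$x+1$ is prime'' is not even meaningful and which the paper leaves tacit; and you verify that for integer $x$ no period $r<1$ can occur, so that ``period $1$'' really does deliver the single equation $\xi(x-1)=\xi(x+1)$ under Definition \ref{oscillation}'s minimum condition --- a point the paper does not address.
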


\begin{proof}
Suppose $x$ is a point of oscillation of $\xi$ with period $1$. Then it follows by Definition \ref{oscillation} that $\xi(x-1)=\xi(x+1)$. Again it follows by Theorem \ref{bertrand} that there must be at least one prime in the interval $(x-1,x+1]$. The obvious candidates must be either $x$ or $x+1$. We claim that $x+1$ must be prime. Suppose the contrary, that $x$ is prime. In the case $\xi(x-2)<\xi(x-1)$, then by letting $\xi(x-1)=L$, it follows that $\xi(x+1)=L-2$. This contradicts the fact that $x$ is a point of oscillation of $\xi$. On the other hand, let us assume $\xi(x-2)>\xi(x-1)=L$. Then it follows that $\xi(x+1)=L+2$, which again is absurd. This completes the proof.
\end{proof}

\section{Deviation in oscillations of $\xi$}
In this section we introduce the notion of deviation in oscillations of the second prime-index function. We relate this concept with the theory of prime gaps.

\begin{definition}\label{deviation}
Let $x,y\in \mathbb{R}$ with $x<y$. Then by the deviation in oscillation of $\xi$ at the points $x$ and $y$, we mean the value \begin{align}\mathcal{D}(\xi(x),\xi(y))=\bigg|\sum \limits_{\substack{n\geq 1\\x_{n}, x_{n+1}\in \mathbb{N}}}\xi(x_{n+1})-\xi(x_{n})\bigg|\nonumber
\end{align}for $x_n,x_{n+1}\in (x,y)$.
\end{definition}

\begin{theorem}\label{large deviation}
There exist some $N_0>0$ such that \begin{align}\mathcal{D}(\xi(p),\xi(q))>N\nonumber
\end{align}for all $N\geq N_0$, where $p$ and $q$ are primes.
\end{theorem}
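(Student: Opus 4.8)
The plan is to exploit the fact that prime gaps are unbounded, together with the exact formula from Lemma \ref{gensel4}, to force the deviation quantity to be arbitrarily large between suitable pairs of primes. First I would unwind Definition \ref{deviation}: for $p<q$ primes, the quantity $\mathcal{D}(\xi(p),\xi(q))$ is the absolute value of a telescoping-type sum of consecutive differences $\xi(x_{n+1})-\xi(x_n)$ as the $x_n$ range over the integer points in $(p,q)$. If one takes the $x_n$ to be \emph{all} consecutive integers in the interval, the sum telescopes and $\mathcal{D}(\xi(p),\xi(q)) = |\xi(q-1)-\xi(p+1)|$ or a similar boundary expression; more usefully, if one takes the $x_n$ to run through the consecutive primes in $(p,q)$, then by Lemma \ref{gensel4} each term $\xi(p_{i+1})-\xi(p_i) = \iota(p_i)(d_i-2)$ where $d_i = p_{i+1}-p_i$ is the $i$-th gap.

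The key step is then a choice of $p$ and $q$ realising a large value. I would invoke the classical fact that there are arbitrarily long strings of consecutive composite integers (e.g. $k!+2,\dots,k!+k$), so there exist consecutive primes $p_i<p_{i+1}$ with gap $d_i = p_{i+1}-p_i$ as large as we please. Taking $q = p_{i+1}$ and $p = p_i$ (or a prime just below), the single-term sum already gives $\mathcal{D}(\xi(p),\xi(q)) \geq d_i - 2$, which exceeds any prescribed $N$ once the gap is large enough; since there are infinitely many such large-gap pairs, for every $N \geq N_0$ (indeed every $N_0 > 0$) we can find primes $p,q$ with $\mathcal{D}(\xi(p),\xi(q)) > N$. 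One should double-check the index conventions in Definition \ref{deviation} — whether the sum is required to include consecutive integer points or merely some admissible increasing sequence $x_n \in (p,q)$ — and argue that in either reading a suitable selection of the $x_n$ makes the partial sum large; the cleanest route is to note that if $(p,q)$ contains a maximal prime gap of size $d$, then choosing $x_n$ to bracket that gap isolates a contribution of magnitude $d-2$ that cannot be cancelled.

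The main obstacle I anticipate is purely a matter of pinning down the (somewhat ambiguous) Definition \ref{deviation}: the sum as written has free-looking indices $x_n, x_{n+1}$ constrained only to lie in $(x,y)$, so one must argue that the supremum (or the intended value) of $\mathcal{D}$ over admissible configurations grows with the length and gap-structure of the interval. Once that is fixed, the rest is immediate from the unboundedness of prime gaps and Lemma \ref{gensel4}; no delicate estimate is needed, only the qualitative statement that gaps are unbounded. I would close by remarking that this is the ``opposite'' phenomenon to Cramer's conjecture: whereas Cramer predicts an upper bound on a single difference $\xi(p_{n+1})-\xi(p_n)$, here we are extracting a lower bound on an aggregate deviation, and the two are entirely consistent since the deviation accumulates the absolute values of many individually small (conjecturally) increments.
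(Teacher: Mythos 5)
Your proposal is correct and follows essentially the same route as the paper: both arguments pick consecutive primes $p,q$ with an arbitrarily large gap (unboundedness of prime gaps), observe that on the prime-free interval $(p,q)$ every increment $\xi(x_{n+1})-\xi(x_n)$ has the same sign $\iota(p)$, and conclude that the sum accumulates to roughly $q-p-2$ without cancellation. Your extra care in pinning down the ambiguous Definition \ref{deviation} and the explicit lower bound $d-2$ via Lemma \ref{gensel4} only make the argument more precise than the paper's version.
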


\begin{proof}
Using the fact that there are arbitrarily large gaps between primes, choose $p,q\in \mathbb{\wp}$ such that $|p-q|=\mathrm{Inf}(|p-q|)>\delta$ for $\delta>0$ sufficiently large. It follows that for points $x_n,x_{n+1}\in (p,q)$, each ~$\xi(x_{n+1})-\xi(x_n)$ are of the same sign, and so the discrepancy \begin{align}\bigg|\sum \limits_{\substack{n\geq 1\\x_{n}, x_{n+1}\in \mathbb{N}}}\xi(x_{n+1})-\xi(x_{n})\bigg|\nonumber
\end{align}is large, thereby ending the proof.
\end{proof}

\begin{remark}
Theorem \ref{deviation} roughly speaking tells us that the deviation in oscillations of $\xi$ at any two points in $\mathbb{R}$ can be made arbitrarily large. Next we expose an equivalence between the distribution of twin primes and the prime index function. That is to say, the twin prime conjecture can be formulated and proven in this language if studied further.
\end{remark}

\begin{lemma}
Let $p>2$ be a prime, then $p$ is a twin prime if and only if $\xi(p)=\xi(p+2)$.
\end{lemma}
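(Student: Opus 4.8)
The plan is to derive the lemma from Lemma~\ref{gensel4} in the forward direction and from a short parity computation in the converse, after first fixing the convention that a prime $p>2$ is called a \emph{twin prime} precisely when $p+2$ is also prime. (The table in Section~3 already reflects this reading, since for instance $\xi(7)=1\neq 3=\xi(9)$ even though $5,7$ is a twin pair, so the coincidence being detected is genuinely the primality of $p+2$.)

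First I would handle the forward direction. Assuming $p$ and $p+2$ are both prime, I would note that $p\geq 3$ forces $p+1$ to be even and greater than $3$, hence composite, so that $p+2$ is the prime immediately following $p$. Applying Lemma~\ref{gensel4} with $p_i=p$, $p_{i+1}=p+2$, and $d=2$ then yields $\xi(p+2)=\xi(p)+\iota(p)(2-2)=\xi(p)$.

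For the converse I would argue contrapositively: assume $p+2$ is not prime and show $\xi(p)\neq\xi(p+2)$. Expanding the partial sum directly gives
\[
\xi(p+2)-\xi(p)=\iota(p+1)+\iota(p+2)=(-1)^{\pi(p+1)}+(-1)^{\pi(p+2)}.
\]
Since $p+1$ is composite (even and $\geq 4$) and $p+2$ is composite by hypothesis, the interval $(p,p+2]$ contains no prime, so $\pi(p)=\pi(p+1)=\pi(p+2)$; hence each term equals $\iota(p)$ and $\xi(p+2)-\xi(p)=2\iota(p)=\pm 2\neq 0$, contradicting $\xi(p)=\xi(p+2)$.

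I do not expect a genuine obstacle here: the argument is elementary once the definition of \emph{twin prime} is pinned down. The only point requiring a little care is the observation that $p+1$ is automatically composite for every prime $p>2$, which is precisely what makes $p+2$ the unique candidate for the next prime and what drives both directions of the equivalence; alternatively, the whole proof can be routed through Lemma~\ref{gensel4} by observing that $d=2$ is equivalent to $p$ being a twin prime.
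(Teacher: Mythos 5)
Your proof is correct and follows essentially the same route as the paper: one direction via Lemma~\ref{gensel4} with $d=2$, and the other via the expansion $\xi(p+2)-\xi(p)=\iota(p+1)+\iota(p+2)$ together with the observation that $p+1$ is composite for $p>2$. Your contrapositive phrasing of the converse is just a repackaging of the paper's derivation of $\iota(p)=-\iota(p+2)$ from $\xi(p)=\xi(p+2)$, and your remark pinning down the one-sided convention for ``twin prime'' is a sensible clarification rather than a departure.
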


\begin{proof}
First let $p$ be prime, then it is clear that $\iota(p+1)=\iota(p)$ since $p+1$ cannot be prime. It follows that \begin{align}\xi(p+1)&=\xi(p)+\iota(p+1)\nonumber \\&=\xi(p)+\iota(p).\label{gensel1}
\end{align}Again\begin{align}\xi(p+2)=\xi(p+1)+\iota(p+2).\label{gensel2}
\end{align}Combining \ref{gensel1} and \ref{gensel2} with the underlying assumption that $\xi(p)=\xi(p+2)$, we obtain the relation $\iota(p)=-\iota(p+2)$. This certainly implies that $p+2$ is prime. Thus $p$ must indeed be a twin prime. Conversely suppose $p$ is a twin prime, then $p+2$ is also prime. By applying the relation  in Lemma \ref{gensel4} and setting $d=2$, the result follows immediately.
\end{proof}

\section{End remarks}
The twin prime conjecture \cite{tenenbaum2015introduction} is one of the oldest unsolved problem in mathematics. Thanks to the recent progress  towards the complete resolution of the conjecture. It states that there are infinitely many prime pairs $(p,q)$ with $p\neq q$ such that $|p-q|=2$. In other words there are infinitely many intervals of length $2$ that contains two primes. In relation to Lemma \ref{gensel2}, the twin prime conjecture can be reformulated as 
\bigskip

\begin{conjecture}~(Twin prime)
There are infinitely many consecutive primes $p_1<p_2$ such that $\xi(p_1)=\xi(p_2)$.
\end{conjecture}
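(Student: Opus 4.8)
The plan is to observe first that, via the Lemma just proved, this conjecture is \emph{equivalent} to the classical twin prime conjecture: for a prime $p>2$ one has $\xi(p)=\xi(p+2)$ if and only if $p+2$ is prime, and in that case $p+1$ is composite, so $p$ and $p+2$ are consecutive; conversely, if $p_1<p_2$ are consecutive primes with $\xi(p_1)=\xi(p_2)$, then Lemma~\ref{gensel4} forces $\iota(p_1)(p_2-p_1-2)=0$, hence $p_2-p_1=2$. Thus any genuine proof must in effect establish the twin prime conjecture itself, which is why what follows is a strategy rather than a complete argument.

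First I would reduce the statement to a parity assertion about $\pi$. Since $p+1$ is composite for every prime $p>2$, we have $\iota(p+1)=\iota(p)$, so by Lemma~\ref{gensel4} with $d=2$ the equality $\xi(p)=\xi(p+2)$ is equivalent to $\iota(p)=-\iota(p+2)$, i.e.\ to $\pi$ having opposite parities at $p$ and $p+2$. In the language of Definition~\ref{oscillation} this says that $\xi$ oscillates with period $1$ at infinitely many half-integers that lie strictly between two consecutive primes differing by $2$. So the target becomes: the parity of $\pi(n)$ flips infinitely often across a window of length $2$ beginning at a prime.

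Next I would try to feed in bounded-gap technology. The Zhang--Maynard--Tao theorem produces an absolute constant $C$ and infinitely many consecutive prime pairs $p_n<p_{n+1}$ with $p_{n+1}-p_n\le C$; inserting this into Lemma~\ref{gensel4} gives $|\xi(p_{n+1})-\xi(p_n)|\le C-2$ for infinitely many $n$, so at least the \emph{bounded-deviation} form of the conjecture is already a theorem. The remaining task is to sharpen "the change in $\xi$ is bounded" to "the change in $\xi$ is zero", i.e.\ to pin the gap down to exactly $2$; one natural attempt is to run the GPY/Maynard sieve directly against the admissible pair $\{0,2\}$ so that the detecting weight is compelled to register two primes rather than merely at least two primes inside a longer admissible tuple.

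The main obstacle is exactly the one that makes the twin prime conjecture hard: the parity problem of sieve theory blocks precisely this last sharpening, and the $\xi$-reformulation offers no evident bypass — rewriting "gap $=2$" as "$\xi(p)=\xi(p+2)$" repackages the same arithmetic information without adding any. Absent a genuinely new input, the best one can honestly record is the partial statement that, from bounded gaps (or from Cram\'er-type control of the second prime index function), there are infinitely many consecutive prime pairs on which $\xi$ changes by at most an absolute constant, with the twin prime case being the boundary value one cannot currently reach.
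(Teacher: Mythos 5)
This statement is stated in the paper as a conjecture, not a theorem: it is the classical twin prime conjecture rewritten through the lens of $\xi$, and the paper offers no proof of it — only the derivation of the equivalence from Lemma~\ref{gensel4} (and the twin-prime lemma $\xi(p)=\xi(p+2)\iff p+2$ prime). Your proposal correctly identifies exactly this: your two-directional argument — that $\xi(p_1)=\xi(p_2)$ for consecutive primes forces $\iota(p_1)(p_2-p_1-2)=0$ and hence $p_2-p_1=2$, and conversely that $d=2$ gives $\xi(p_1)=\xi(p_2)$ — is precisely the paper's route to the reformulation, and it is sound. Your refusal to claim a proof is the correct call, since any proof would resolve the twin prime conjecture itself; the added observation that bounded gaps (Zhang--Maynard--Tao) yield infinitely many consecutive pairs with $|\xi(p_{n+1})-\xi(p_n)|\le C-2$ is a correct and genuinely informative partial result that goes beyond what the paper records, and your diagnosis that the $\xi$-reformulation does not evade the parity obstruction is accurate.
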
 
\bigskip

In a similarly vein, Cramer's conjecture can be stated in terms of the second prime as 
\bigskip

\begin{conjecture}~(Cramer)
Let $p_{n+1}>p_{n}$ be any sequence of consecutive primes, then we have \begin{align}\xi(p_{n+1})-\xi(p_n)\ll (\log p_n)^2.\nonumber
\end{align}
\end{conjecture}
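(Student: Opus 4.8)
The plan is to show that the assertion displayed in this conjecture is not a new statement at all, but a verbatim reformulation of the classical Cram\'er conjecture $p_{n+1}-p_n\ll(\log p_n)^2$, obtained by feeding Lemma \ref{gensel4} into the definition of $\xi$. First I would extract the exact identity hidden in Lemma \ref{gensel4}. Writing $d=p_{n+1}-p_n\geq 2$ for a pair of consecutive primes, that lemma gives $\xi(p_{n+1})-\xi(p_n)=\iota(p_n)(d-2)$; since $\iota(p_n)\in\{1,-1\}$ and $d-2\geq 0$, taking absolute values yields
\begin{align}
\bigl|\xi(p_{n+1})-\xi(p_n)\bigr|=d-2=(p_{n+1}-p_n)-2,\nonumber
\end{align}
equivalently $\bigl|\xi(p_{n+1})-\xi(p_n)\bigr|+2=p_{n+1}-p_n$, which is precisely the relation recorded in the discussion preceding the conjecture. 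This single identity is the entire engine of the argument.

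Next I would run the two–way implication with the poly\-logarithmic bound. The classical Cram\'er conjecture states $p_{n+1}-p_n\ll(\log p_n)^2$. Subtracting the constant $2$ is invisible to the Vinogradov symbol: the finitely many small $n$ are absorbed into the implied constant, and for the remaining $n$ one has $d-2<d$, so $p_{n+1}-p_n\ll(\log p_n)^2$ holds if and only if $d-2\ll(\log p_n)^2$, i.e.\ if and only if $\bigl|\xi(p_{n+1})-\xi(p_n)\bigr|\ll(\log p_n)^2$ by the identity above. To pass between this absolute-value form and the signed form appearing in the conjecture, observe that $\iota(p_n)=(-1)^{\pi(p_n)}=(-1)^{n}$, so $\xi(p_{n+1})-\xi(p_n)=(-1)^{n}(d-2)$ alternates in sign; in particular it equals $+(d-2)$ for every even $n$. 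Hence a bound $\xi(p_{n+1})-\xi(p_n)\ll(\log p_n)^2$ valid for all $n$ already forces $d-2\ll(\log p_n)^2$ (restrict to even $n$), while conversely an upper bound on the absolute value trivially gives the same upper bound on the signed quantity. Therefore the conjecture as stated is logically equivalent to $p_{n+1}-p_n\ll(\log p_n)^2$.

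I do not anticipate a real obstacle in carrying this out: once Lemma \ref{gensel4} is in hand, everything is elementary bookkeeping with the $\ll$ notation, and the only point deserving a sentence of care is the interplay between the signed difference $\xi(p_{n+1})-\xi(p_n)$ and its absolute value $d-2$, which is settled by the explicit alternation $\iota(p_n)=(-1)^n$. The genuine difficulty, of course, is displaced rather than removed: proving the stated bound is exactly as hard as Cram\'er's conjecture itself, and the reformulation merely relocates that difficulty onto the problem of obtaining sharp estimates for the second prime index function $\xi$, as the closing remark of this section makes explicit.
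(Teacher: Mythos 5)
Your proposal correctly identifies that this statement is a conjecture, not a theorem, and that all one can do is establish its equivalence with the classical Cram\'er conjecture via the identity $|\xi(p_{n+1})-\xi(p_n)|=p_{n+1}-p_n-2$ from Lemma \ref{gensel4}; this is exactly the derivation the paper itself gives in the discussion preceding the statement. Your extra remark on the sign alternation $\iota(p_n)=(-1)^{\pi(p_n)}=(-1)^n$, clarifying why the signed form of the bound still captures the gap condition, is a small but genuine improvement in precision over the paper's treatment.
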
 
%

\bibliographystyle{amsplain}

\end{document}